\documentclass[11pt,letterpaper]{article}

\usepackage{amsfonts}
\usepackage{amsthm}
\usepackage{amsmath,amssymb}
\usepackage{mathtools}

\usepackage{graphicx}
\usepackage{subfig}
\usepackage{caption,subcaption}
\usepackage{xcolor}

\usepackage{tikz}
\usetikzlibrary{arrows.meta,calc,positioning,backgrounds,fit,decorations.pathreplacing}

\tikzset{
  redpt/.style   = {circle, fill=red!75, draw=black, inner sep=1.7pt},
  bluept/.style  = {circle, fill=blue!70, draw=black, inner sep=1.7pt},
  match/.style   = {thick},
  shiftedge/.style = {thick, dashed},
  dircycle/.style  = {->, >=Latex, thick},
  levcurve/.style  = {thick},
  regionA/.style = {fill=green!20, draw=green!50!black, opacity=.45},
  regionB/.style = {fill=orange!25, draw=orange!70!black, opacity=.45},
  regionC/.style = {fill=cyan!20, draw=cyan!60!black, opacity=.45},
  note/.style    = {font=\small},
  lab/.style     = {font=\small},
  axisarrow/.style = {->, >=Latex, thick}
}

\allowdisplaybreaks

\usepackage{url}
\usepackage[colorlinks=true,citecolor=black,linkcolor=black,urlcolor=blue]{hyperref}

\usepackage{fullpage}

\usepackage[mathlines]{lineno}

\newcommand*\patchAmsMathEnvironmentForLineno[1]{%
  \expandafter\let\csname old#1\expandafter\endcsname\csname #1\endcsname
  \expandafter\let\csname oldend#1\expandafter\endcsname\csname end#1\endcsname
  \renewenvironment{#1}%
     {\linenomath\csname old#1\endcsname}%
     {\csname oldend#1\endcsname\endlinenomath}}%
\newcommand*\patchBothAmsMathEnvironmentsForLineno[1]{%
  \patchAmsMathEnvironmentForLineno{#1}%
  \patchAmsMathEnvironmentForLineno{#1*}}%
\AtBeginDocument{%
\patchBothAmsMathEnvironmentsForLineno{equation}%
\patchBothAmsMathEnvironmentsForLineno{align}%
\patchBothAmsMathEnvironmentsForLineno{flalign}%
\patchBothAmsMathEnvironmentsForLineno{alignat}%
\patchBothAmsMathEnvironmentsForLineno{gather}%
\patchBothAmsMathEnvironmentsForLineno{multline}%
}

\allowdisplaybreaks

\newtheorem{theorem}{Theorem}[section]

\newtheorem{lemma}[theorem]{Lemma}
\newtheorem{corollary}[theorem]{Corollary}

\newcommand{\M}{\ensuremath{\mathcal{M}}}

\title{Characterization of bichromatic maximum-sum matchings of points and matching equilibrium}

\author{
	Oscar Chacón-Rivera\thanks{Pontificia Universidad Católica de Chile, Facultad de Matemáticas, Chile. {\tt opchacon@mat.uc.cl}.}
}

\begin{document}

\maketitle

\begin{abstract}
We study maximum-sum red-blue matchings and matching equilibrium for finite planar point sets. For a red-blue perfect matching $M = \{(a_i,b_i) : 1 \le i \le n\}$, we define the gain of a directed red cycle as the change in total weight produced by cyclically shifting the corresponding blue partners. We prove that $M$ is maximum-sum if and only if every directed red cycle has nonpositive gain, and we derive a geometric sufficient condition for optimality from cyclic intersections of distance-difference regions. We then characterize balanced matchings, in which all red-blue perfect matchings have the same total weight. Equilibrium is shown to be equivalent to vanishing cycle gains, to an additive form of the distance matrix, and to a common level-set condition for distance-difference functions. In the squared Euclidean case this yields an orthogonality classification, while in the Euclidean case it yields a hyperbolic level-set description and a collinear-separation classification in the nondegenerate setting.
\end{abstract}

\section{Introduction}
Let $R$ and $B$ be two point sets in the Euclidean plane with $|R| = |B|$. The points in $R$ are called \textit{red} points, and those in $B$ are called \textit{blue} points. A \textit{matching} $\M$ of $R \cup B$ is a partition of $R \cup B$ into $n$ pairs such that each pair consists of a red point and a blue point. A point $p \in R$ and a point $q \in B$ are matched by $M$ if and only if the (unordered) pair $(p,q)$ is in the matching $M$.

Given a metric or a semi-metric function $d : \mathbb{R}^2 \times \mathbb{R}^2 \rightarrow \mathbb{R}_{\ge 0}$, we say that a matching $\M$ is {\em max-sum} if it maximizes $\sum_{(p,q)\in\M} d(p,q)$ among all matchings of $R$ and $B$. Recall that a function is a semi-metric if it satisfies all the properties of a metric function except for the triangle inequality. 

Chacón-Rivera and Pérez-Lantero~\cite{Chacon2026} characterized a maximum-sum matching in terms of $H$-sets and $h$-sets defined as
\begin{align*}
 H(p,q) = & \{ x \in \mathbb{R}^2 : d(p,q') - d(q,q') \le d(p,x) - d(q,x) \},
\end{align*} and 
\begin{align*}
 h(p,q) = & \{ x \in \mathbb{R}^2 : d(p,x) - d(q,x) \le d(p,p') - d(q,p') \},
\end{align*} where $p,q$ are red points, and $p',q'$ are blue points, and $\{ (p,p'),(q,q') \}$ is a maximum-sum matching of those four points, that is, $M$ is a 2-local max-sum matching as defined by Biniaz et al. \cite{Biniaz2025}. This characterization proved useful in simplifying the proof of the common intersection property of disks established by Huemer et al. \cite{Huemer}.

In this article, we consider max-sum matchings between planar colored point sets $R$ and $B$ with $|R| = |B| = n$. In what follows, we generalize the characterization proposed by Chacón-Rivera and Pérez-Lantero~\cite{Chacon2026} to matchings of $n$ red points and $n$ blue points, and then use this characterization to study equilibrium phenomena in red-blue matchings.

\subsection{Related work and motivation}

Geometric matching problems often combine an optimization condition with intersection properties of the objects induced by the selected edges. In the setting of planar point sets, a matched pair naturally induces the disk having the segment joining the two matched points as diameter. Huemer et al.~\cite{Huemer} proved that if $R$ and $B$ are finite point sets in the plane with $|R| = |B|$, and $M$ is a red-blue perfect matching maximizing the total squared Euclidean length of its edges, then all diametral disks induced by the edges of $M$ have a nonempty common intersection. This result showed that a global optimality condition on a matching may force a strong geometric piercing property.

The analogous question for the ordinary Euclidean distance is more subtle. Bereg et al.~\cite{Bereg2} showed that, in the bichromatic Euclidean setting, the disks induced by a maximum-sum matching need not have a common point, although they satisfy weaker intersection properties. In contrast, they proved that for a set of $2n$ uncolored points in the plane, every Euclidean maximum-sum matching induces diametral disks with a nonempty common intersection. More recently, Pérez-Lantero and Seara~\cite{PerezLantero} obtained a bichromatic analogue in terms of ellipses: if $M = \{(r_i,b_i) : 1 \le i \le n\}$ is a Euclidean maximum-sum red-blue matching, then there exists a point $o$ such that \[ \|r_i-o\| + \|b_i-o\| \le \sqrt{2} \cdot \|r_i-b_i\| \] for every $i$.

These intersection results fit naturally into the language of Tverberg graphs. Given a finite point set $P$ and a geometric graph $G$ with vertex set $P$, one says that $G$ is a Tverberg graph if the diametral disks, or balls in higher dimension, induced by the edges of $G$ have a common point. Soberón and Tang~\cite{Soberon} studied this point of view for Hamiltonian structures in the plane, proving that every odd planar point set admits a Hamiltonian cycle which is a Tverberg graph, and that every even planar point set admits a Hamiltonian path with the same property. Pirahmad, Polyanskii, and Vasilevskii~\cite{Pirahmad} refined and extended this framework, obtaining Tverberg Hamiltonian cycles in the plane, Tverberg matchings in higher-dimensional Euclidean spaces, and red-blue variants for diametral balls.

A parallel line of work concerns spanning trees. Abu-Affash, Carmi, and Maman~\cite{Abu} proved that if $T$ is a maximum-weight spanning tree of a finite planar point set, then the diametral disks induced by the edges of $T$ have a common point; more precisely, the center of the smallest enclosing circle of the point set belongs to all these disks. Barabanshchikova and Polyanskii~\cite{Barabanshchikova2} later generalized this phenomenon, proving that max-sum trees in Euclidean spaces are Tverberg graphs and strengthening related intersection results for matchings. Thus, maximum-sum matchings and maximum-weight spanning trees belong to a broader family of extremal geometric graphs whose edges induce commonly pierced disks or balls.

Another important motivation comes from a conjecture of Fingerhut, mentioned by Eppstein~\cite{Eppstein} and motivated by network-design problems of Fingerhut, Suri, and Turner~\cite{Fingerhut}. The conjecture asked whether, for every Euclidean maximum-sum matching $\{(a_i,b_i) : 1 \le i \le n\}$ of $2n$ planar points, there exists a point $o$ such that \[ \|a_i-o\| + \|b_i-o\| \le \frac{2}{\sqrt3}\|a_i-b_i\| \] for every $i$. Equivalently, the ellipses with foci $(a_i,b_i)$ and major axis length $(2/\sqrt3)|a_i-b_i|$ should have a common point. Barabanshchikova and Polyanskii~\cite{Barabanshchikova} confirmed this conjecture by proving the corresponding common-intersection theorem for ellipses induced by a maximum-sum matching. This connects maximum-sum matchings not only with diametral disks, but also with families of ellipses and more general convex sets induced by matching edges.

Maximum-length matchings also appear in classical geometric optimization. For example, Fekete and Meijer~\cite{Fekete} studied relations between maximum matchings, minimum stars, and Steiner stars. More recently, local optimality conditions for Euclidean maximum matchings were investigated by Biniaz, Maheshwari, and Smid~\cite{Biniaz2025}. They introduced $k$-local maximum matchings and obtained approximation bounds comparing locally maximum matchings with globally maximum matchings. This local-to-global perspective is complementary to the present article: local optimality asks how much information is needed to approximate global optimality, while our cycle-gain characterization gives an exact condition for global optimality.

The present article follows a structural route. Rather than proving a new piercing theorem for disks or ellipses, we characterize maximum-sum red-blue matchings through cycle gains and through distance-difference regions associated with ordered pairs of red points. This extends the $n = 3$ characterization by Chacón-Rivera and Pérez-Lantero~\cite{Chacon2026} to arbitrary $n$ in a form that separates algebraic optimality from geometric certificates. We then study balanced, or equilibrium, matchings, for which every red-blue perfect matching has the same total weight. The equilibrium condition leads to additive distance matrices and to geometric level-set characterizations. In the squared Euclidean case, this becomes an orthogonality condition between the spans of the two color classes; in the Euclidean case, it yields hyperbolic level-set constraints and a collinear-separation classification in the nondegenerate case.


\section{Bichromatic max-sum matchings}
\label{sec:bichromatic}

\subsection{Characterization of bichromatic max-sum matchings of $n$ red points and $n$ blue points}
\label{subsec:bic-1}

Consider an arbitrary continuous (semi)metric $d : \mathbb{R}^2 \times \mathbb{R}^2 \to \mathbb{R}$ satisfying the following properties:
\begin{enumerate}
 \item[P1] $d(x,y) \ge 0$ for all $x,y \in \mathbb{R}^2$.
 \item[P2] $d(x,x) = 0$ for all $x \in \mathbb{R}^2$.
 \item[P3] $d(x,y) = d(y,x)$ for all $x,y \in \mathbb{R}^2$.
 \item[P4] For any fixed points $p,q \in \mathbb{R}^2$ with $p \neq q$ and any real constant $t$, the level set $\{ z \in \mathbb{R}^2 : d(p,z) - d(q,z) = t \}$ is path-connected.
\end{enumerate} Note that the first three properties are the standard properties of any semimetric, whereas the fourth property is satisfied by semimetrics such as the Euclidean distance (a metric) and the squared Euclidean distance.

Let $R = \{ a_1, a_2, \dots, a_n \}$ be a set of $n$ red points, $B = \{ b_1, b_2, \dots, b_n \}$ a set of $n$ blue points, and suppose that $M = \{ (a_i,b_i) : i = 1, \dots, n \}$ is a matching of $R \cup B$. Define the sets \[ H(a_i,a_j) = \{ x \in \mathbb{R}^2 : d(a_i,b_j) - d(a_j,b_j) \le d(a_i,x) - d(a_j,x) \}, \] and write $h(a_i,a_j) = H(a_j,a_i)$. If $M$ is a \emph{2-local} max-sum, then $H(a_i,a_j) \cap H(a_j,a_i) \neq \emptyset$ for every $i \neq j$. Geometrically speaking, $H(a_i,a_j)$ is the region of every possible point $x$ in the plane for which $\{ (a_i,x), (a_j,b_j) \}$ is a max-sum matching, while $h(a_i,a_j)$ is the region of every possible point $x$ in the plane for which $\{ (a_i,b_i), (a_j,x) \}$ is a max-sum matching. See Figure \ref{fig:Hhsets}. Observe that for the Euclidean distance, the boundaries are distance-difference level curves with foci \(a_i\) and \(a_j\) (see Figure \ref{fig:Hhsets}(a)); for the squared Euclidean distance, the boundaries are parallel lines, and \(S(a_i,a_j)\) is a strip (see Figure \ref{fig:Hhsets}(b)). 

\begin{figure}[t]
\centering
\begin{tikzpicture}[scale=1.0]

\begin{scope}[xshift=-4.2cm]
  \node[note] at (0,-3.2) {(a) Euclidean distance};

  \fill[green!20, opacity=.55]
    plot[domain=-1.9:1.9, samples=120, smooth]
      ({-0.80 - 0.23*\x*\x}, {\x})
    --
    plot[domain=1.9:-1.9, samples=120, smooth]
      ({0.80 + 0.23*\x*\x}, {\x})
    -- cycle;

  \draw[levcurve]
    plot[domain=-1.9:1.9, samples=120, smooth]
      ({-0.80 - 0.23*\x*\x}, {\x});
  \draw[levcurve]
    plot[domain=-1.9:1.9, samples=120, smooth]
      ({0.80 + 0.23*\x*\x}, {\x});

  \node[redpt, label=below:$a_i$] (ai1) at (-2.1,0) {};
  \node[redpt, label=below:$a_j$] (aj1) at ( 2.1,0) {};

  \node[lab] at (0,-2.5) {$S(a_i,a_j)=H(a_i,a_j)\cap h(a_i,a_j)$};
  \node[lab] at (-2.7,2.2) {$h(a_i,a_j)$};
  \node[lab] at ( 2.7,2.2) {$H(a_i,a_j)$};

  \draw[->] (-2.2,1.9) -- (-1.25,1.2);
  \draw[->] ( 2.2,1.9) -- ( 1.25,1.2);
\end{scope}

\begin{scope}[xshift=4.2cm]
  \node[note] at (0,-3.2) {(b) Squared Euclidean distance};

  \fill[green!20, opacity=.55] (-0.9,-2.0) rectangle (0.9,2.0);

  \draw[levcurve] (-0.9,-2.2) -- (-0.9,2.2);
  \draw[levcurve] ( 0.9,-2.2) -- ( 0.9,2.2);

  \node[redpt, label=below:$a_i$] (ai2) at (-2.1,0) {};
  \node[redpt, label=below:$a_j$] (aj2) at ( 2.1,0) {};

  \node[lab] at (0,-2.5) {$S(a_i,a_j)=H(a_i,a_j)\cap h(a_i,a_j)$};
  \node[lab] at (-2.2,1.9) {$h(a_i,a_j)$};
  \node[lab] at ( 2.2,1.9) {$H(a_i,a_j)$};

  \draw[->] (-1.7,1.6) -- (-1.05,1.0);
  \draw[->] ( 1.7,1.6) -- ( 1.05,1.0);
\end{scope}

\end{tikzpicture}
\caption{The regions \(H(a_i,a_j)\) and \(h(a_i,a_j)\), and their intersection
\(S(a_i,a_j)=H(a_i,a_j)\cap h(a_i,a_j)\).}
\label{fig:Hhsets}
\end{figure}

Chacón-Rivera and Pérez-Lantero~\cite{Chacon2026} characterized any max-sum matching of $3$ red points and $3$ blue points in terms of certain intersection of the $H$-sets and the $h$-sets. Precisely,

\begin{theorem}[\cite{Chacon2026}]
\label{thm:3-characterization}
 The matching $\{ (a_i,b_i) : i \in \{1,2,3\} \}$ is a maximum-sum matching of $R \cup B$ if and only if the five intersections \[ H(a_1,a_2) \cap H(a_2,a_3) \cap H(a_3,a_1), \quad h(a_1,a_2) \cap h(a_2,a_3) \cap h(a_3,a_1) \] \[ H(a_1,a_2)\cap h(a_1,a_2), \quad H(a_2,a_3)\cap h(a_2,a_3), \quad \text{and} \quad H(a_3,a_1)\cap h(a_3,a_1) \] are not empty.
\end{theorem}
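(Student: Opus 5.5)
To prove the statement I would reduce it to comparing weights of permutations. With $n=3$ there are six matchings $\{(a_i,b_{\sigma(i)})\}$, one for each permutation $\sigma$ of $\{1,2,3\}$: the identity, three transpositions, and two 3-cycles. The matching $M$ is max-sum if and only if its weight is at least that of each of the five others. Write $D_{ij}=d(a_i,b_j)$. The plan is to show that each of the five nonemptiness conditions is equivalent to exactly one of these five inequalities. The three pairwise intersections will handle the transpositions, and the two triple intersections will handle the two 3-cycles.

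For a transposition $(i\,j)$, a point $x\in H(a_i,a_j)\cap h(a_i,a_j)$ satisfies
\[ D_{ij}-D_{jj}\le d(a_i,x)-d(a_j,x)\le D_{ii}-D_{ji}. \]
Hence nonemptiness immediately gives $D_{ij}+D_{ji}\le D_{ii}+D_{jj}$. Conversely, suppose this inequality holds. The continuous function $f(x)=d(a_i,x)-d(a_j,x)$ takes the value $D_{ii}-D_{ji}$ at $x=b_i$ and the value $D_{ij}-D_{jj}$ at $x=b_j$. So either $b_i$ or $b_j$ already lies in the intersection, or, more simply, both endpoint values sit at the two ends of the required interval. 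It follows that $b_i$ (and also $b_j$) lies in $S(a_i,a_j)$. So this direction is essentially trivial.

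For the 3-cycle, I would sum the three defining inequalities at a common point $x$ of $H(a_1,a_2)\cap H(a_2,a_3)\cap H(a_3,a_1)$. The right-hand sides telescope to $0$, giving
\[ (D_{12}-D_{22})+(D_{23}-D_{33})+(D_{31}-D_{11})\le 0, \]
that is, the cyclically shifted matching $\{(a_1,b_2),(a_2,b_3),(a_3,b_1)\}$ has weight at most that of $M$. The $h$-sets give the reverse cycle in the same way.

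The converse for the 3-cycle is the main obstacle: from the inequality on the gain I must produce a common point. Set $c_{12}=D_{12}-D_{22}$ and similarly for $c_{23}$ and $c_{31}$, with $f_{ij}(x)=d(a_i,x)-d(a_j,x)$. Since $f_{12}+f_{23}+f_{31}\equiv 0$, it suffices to find $x$ with $f_{12}(x)\ge c_{12}$ and $f_{23}(x)\ge c_{23}$, where $f_{31}=-f_{12}-f_{23}\ge c_{31}$. The map $F=(f_{12},f_{23})$ sends the plane into a region in which I need to hit the triangle $\{u\ge c_{12},\ v\ge c_{23},\ u+v\le -c_{31}\}$, which is nonempty by the gain inequality. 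I would first try a vertex: find $x$ with $f_{12}(x)=c_{12}$ and $f_{23}(x)=-c_{31}-c_{12}$, or, more robustly, move along a level set. Concretely, $b_2$ satisfies $f_{12}(b_2)=c_{12}$. By P4 the level curve $L=\{f_{12}=c_{12}\}$ is path-connected, so I look along $L$ for a point with $f_{23}\in[c_{23},-c_{31}-c_{12}]$. At $b_2$ the value is $f_{23}(b_2)=D_{22}-D_{32}$. I would compare it with the target interval using the transposition inequalities, which I may assume via the pairwise conditions; this is where those hypotheses get used. If $f_{23}(b_2)$ is too large or too small, I continue along $L$ toward points where $f_{23}$ moves the other way, for example toward infinity or toward $a_1$/$a_2$ directions. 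I would then invoke the intermediate value theorem on the path, using P4 and continuity. I expect this to need care about the range of $f_{23}$ on $L$, and possibly a switch to using $b_3$ or $b_1$ on the level sets of $f_{23}$ or $f_{31}$ instead, choosing whichever blue point lies on the wrong side. I would also reuse the analogous argument for the $h$-sets.
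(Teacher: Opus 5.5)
There is a genuine gap in the converse for the two triple intersections, which is the real content of the theorem. Your treatment of the three pairwise sets (both directions, via $b_i,b_j\in H(a_i,a_j)\cap h(a_i,a_j)$) is correct. So is the telescoping argument for the triple sets. Together these show that nonemptiness of the five sets implies $M$ is max-sum. The paper gives no proof of this theorem; it quotes it from Chac\'on-Rivera and P\'erez-Lantero, so your argument has to stand on its own.

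\textbf{The gap.} You never produce the common point, and the strategy you outline can fail outright. Searching the single curve $L_{12}=\{f_{12}=c_{12}\}$ through $b_2$ for a point with $f_{23}\in[c_{23},-c_{12}-c_{31}]$ need not succeed. Take the squared Euclidean distance with $a_1=(1,0)$, $a_2=(0,0)$, $a_3=(2,0)$, $b_1=(5,1)$, $b_2=(10,1)$, $b_3=(0,1)$. Then $M$ is max-sum (weight $123$ against $113,113,83,93,93$) and $L_{12}$ is the line $x_1=10$. On it $f_{23}=4x_1-4$ is identically $36$, while the target interval is $[-4,26]$. So no amount of moving ``toward infinity or toward $a_1,a_2$'' helps: P4 gives connectedness of $L_{12}$ but says nothing about the range of $f_{23}$ on it. Here the triple intersection is the strip $0\le x_1\le 5$, which contains $b_1$ and $b_3$ but not $b_2$. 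Your remark about possibly switching to $b_1$ or $b_3$ is exactly the step that needs proof. You must show that at least one of the three level curves works, and no local rule tells you which.

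\textbf{The missing idea} is a global planar argument. Write $H_{12}=H(a_1,a_2)$, $H_{23}=H(a_2,a_3)$, $H_{31}=H(a_3,a_1)$, and $F=(f_{12},f_{23})$.

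First, the transposition inequalities give $b_2\in H_{12}\cap H_{23}$, $b_3\in H_{23}\cap H_{31}$ and $b_1\in H_{31}\cap H_{12}$. In particular $f_{23}(b_2)$ is never too small.

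Second, $\operatorname{gain}\le 0$ gives $f_{23}+f_{31}=-c_{12}\ge c_{23}+c_{31}$ on $L_{12}$, so $L_{12}\subset H_{23}\cup H_{31}$. These sets are closed, $L_{12}$ is connected by P4, and $L_{12}$ meets $H_{23}$ at $b_2$. Hence either $L_{12}$ meets $H_{23}\cap H_{31}$ (and you are done), or $f_{31}<c_{31}$ on all of $L_{12}$. The same dichotomy holds for $L_{23}=\{f_{23}=c_{23}\}\ni b_3$ and for $L_{31}=\{f_{31}=c_{31}\}\ni b_1$.

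Third, suppose all three curves fail. Then $F(\mathbb{R}^2)$ misses the union $Z$ of the rays $\{u=c_{12},\ v\le -c_{12}-c_{31}\}$, $\{v=c_{23},\ u\ge c_{12}\}$ and $\{u+v=-c_{31},\ v\ge c_{23}\}$. Yet $F(b_2)$, $F(b_3)$, $F(b_1)$ lie respectively in $\{v>c_{23},\ u+v>-c_{31}\}$, $\{u<c_{12},\ u+v<-c_{31}\}$ and $\{u>c_{12},\ v<c_{23}\}$. These are distinct components of $\mathbb{R}^2\setminus Z$, which contradicts the connectedness of $F(\mathbb{R}^2)$.

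If two red points coincide, the corresponding $H$-set is the whole plane and a blue point already works. The $h$-triple is the same argument applied to the reversed cycle.
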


When $M$ is a max-sum matching, the previous characterization for $n = 3$ implies a local triangular intersection of the $H$-sets in the general case.

\begin{lemma}
\label{lem:3-local-H-intersection}
 Let $n \ge 3$. If $M$ is max-sum, then for every three distinct indices $i,j,k$, \[ H(a_i,a_j) \cap H(a_j,a_k) \cap H(a_k,a_i) \neq \emptyset. \]
\end{lemma}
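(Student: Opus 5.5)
The plan is to restrict to the three pairs indexed by $i,j,k$ and apply Theorem~\ref{thm:3-characterization}. Set $R' = \{a_i,a_j,a_k\}$, $B' = \{b_i,b_j,b_k\}$ and $M' = \{(a_i,b_i),(a_j,b_j),(a_k,b_k)\}$.

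First we show that $M'$ is a max-sum matching of $R' \cup B'$. Suppose some other perfect matching $N'$ of $R'\cup B'$ had strictly larger total weight. Then $(M\setminus M') \cup N'$ is a red-blue perfect matching of $R\cup B$. Its weight exceeds that of $M$, because the pairs outside the three chosen indices are unchanged. This contradicts the maximality of $M$, so every sub-matching of a max-sum matching is max-sum for its own point set.

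Next we relabel $i,j,k$ as $1,2,3$ in this cyclic order, so that $a_1=a_i$, $a_2=a_j$, $a_3=a_k$, with the blue partners relabeled accordingly. We must check that the sets $H$ computed inside the sub-instance agree with those in the statement. This holds because $H(a_i,a_j)$ depends only on $a_i$, $a_j$, $b_j$ and $d$, and not on the other points. Theorem~\ref{thm:3-characterization} applies to $M'$, and its first listed intersection is
\[ H(a_1,a_2)\cap H(a_2,a_3)\cap H(a_3,a_1) = H(a_i,a_j)\cap H(a_j,a_k)\cap H(a_k,a_i), \]
which is therefore nonempty.

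There is no real obstacle. The only points needing care are the restriction argument and the consistency of the index convention in the definition of $H$ after relabeling. Since the theorem is stated for an arbitrary labeling of three pairs, any cyclic order of $i,j,k$ is covered.
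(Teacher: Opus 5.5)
Your proposal is correct and follows essentially the same route as the paper: restrict to the three pairs, use an exchange argument to show the restricted matching is max-sum, then apply Theorem~\ref{thm:3-characterization} to the ordered triple $(a_i,a_j,a_k)$. Your extra remark that $H(a_i,a_j)$ depends only on $a_i$, $a_j$, $b_j$ and $d$ makes explicit a consistency check that the paper leaves implicit.
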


\begin{proof}
 Fix three distinct indices $i,j,k$, and let us restrict to the sets $R_{ijk} = \{ a_i,a_j,a_k \}$ and $B_{ijk} = \{ b_i,b_j,b_k \}$, and the restricted matching \[ M_{ijk} = \{ (a_i,b_i),(a_j,b_j),(a_k,b_k) \}. \]
 
 We first claim that $M_{ijk}$ is a max-sum matching of $R_{ijk} \cup B_{ijk}$. By contradiction, suppose it is not the case. Then there exists a permutation $\tau$ of $\{i,j,k\}$ such that \[ d(a_i,b_{\tau(i)}) + d(a_j,b_{\tau(j)}) + d(a_k,b_{\tau(k)}) > d(a_i,b_i) + d(a_j,b_j) + d(a_k,b_k). \] Define $\widetilde{M}$ as the matching of $R \cup B$ by replacing \[ (a_i,b_i),(a_j,b_j),(a_k,b_k) \qquad \text{with} \qquad (a_i,b_{\tau(i)}),(a_j,b_{\tau(j)}),(a_k,b_{\tau(k)}) \] and leaving every other pair of $M$ unchanged. Then \[ \sum_{(a,b) \in \widetilde{M}} d(a,b) > \sum_{(a,b) \in M} d(a,b), \] contradicting that $M$ is max-sum. Therefore, $M_{ijk}$ is max-sum on $R_{ijk} \cup B_{ijk}$.

 By Theorem \ref{thm:3-characterization}, applied to the ordered triple $(a_i,a_j,a_k)$, we obtain \[ H(a_i,a_j) \cap H(a_j,a_k) \cap H(a_k,a_i) \neq \emptyset, \] proving the claim. 
\end{proof} 

We now prove an algebraic characterization for the general case $n \ge 4$. For every subset $\{ a_{i_1}, a_{i_2}, \dots, a_{i_m} \}$ of $R$, define $C = (a_{i_1}, a_{i_2} \dots, a_{i_m})$ as the directed (red) cycle of the red vertices $(a_{i_1}, a_{i_2}, \dots, a_{i_m})$, where $i_1, \dots, i_m$ are distinct and $m \ge 2$. Also, define its \emph{cycle gain} \[ \operatorname{gain}(C) = \sum_{j=1}^m d(a_{i_j},b_{i_{j+1}}) - d(a_{i_j},b_{i_j}) \] where $i_{m+1} = i_1$. Thus, $\operatorname{gain}(C)$ measures the change in total weight obtained by replacing the matched pairs $(a_{i_1},b_{i_1}), \dots, (a_{i_m},b_{i_m})$ with the cyclically shifted pairs $(a_{i_1}, b_{i_2}), \dots, (a_{i_m},b_{i_1})$. See Figure \ref{fig:cycle-gain}. 

\begin{figure}[t]
\centering
\begin{tikzpicture}[scale=1.0]

\node[redpt, label=above:$a_1$] (a1) at (0, 2.6) {};
\node[redpt, label=right:$a_2$] (a2) at (2.6, 0) {};
\node[redpt, label=below:$a_3$] (a3) at (0,-2.6) {};
\node[redpt, label=left:$a_4$]  (a4) at (-2.6,0) {};

\node[bluept, label=below:$b_1$] (b1) at (0, 1.0) {};
\node[bluept, label=left:$b_2$] (b2) at (1.0, 0) {};
\node[bluept, label=above:$b_3$] (b3) at (0,-1.0) {};
\node[bluept, label=right:$b_4$]  (b4) at (-1.0,0) {};

\draw[dircycle] (a1) -- (a2);
\draw[dircycle] (a2) -- (a3);
\draw[dircycle] (a3) -- (a4);
\draw[dircycle] (a4) -- (a1);

\draw[match] (a1) -- (b1);
\draw[match] (a2) -- (b2);
\draw[match] (a3) -- (b3);
\draw[match] (a4) -- (b4);

\draw[shiftedge] (a1) -- (b2);
\draw[shiftedge] (a2) -- (b3);
\draw[shiftedge] (a3) -- (b4);
\draw[shiftedge] (a4) -- (b1);

\draw[match] (-5.6,-3.3) -- (-4.6,-3.3);
\node[lab, anchor=west] at (-4.4,-3.3) {original matching edges \((a_i,b_i)\)};
\draw[shiftedge] (2.0,-3.3) -- (3.0,-3.3);
\node[lab, anchor=west] at (3.2,-3.3) {cyclic shift edges \((a_i,b_{i+1})\)};


\end{tikzpicture}
\caption{A directed red cycle \(C=(a_1,a_2,a_3,a_4)\). The gain of \(C\) is the change in total weight
obtained by replacing the solid matching edges \((a_i,b_i)\) with the dashed cyclically shifted edges
\((a_i,b_{i+1})\), where \(b_5=b_1\).}
\label{fig:cycle-gain}
\end{figure}

\begin{theorem}
\label{thm:algebraic-cycle-characterization}
 The matching $M = \{ (a_i,b_i) \}_{i=1}^n$ is a max-sum matching of $R \cup B$ if and only if $\operatorname{gain}(C) \le 0$ for every directed red cycle $C = (a_{i_1}, \dots, a_{i_m})$ (with $2 \le m \le n)$.
\end{theorem}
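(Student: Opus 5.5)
The proof will go through the permutation description of matchings together with the cycle decomposition of permutations. Every red-blue perfect matching of $R \cup B$ has the form $M_\sigma = \{(a_i,b_{\sigma(i)}) : 1 \le i \le n\}$ for a unique permutation $\sigma$ of $\{1,\dots,n\}$, and $M = M_{\mathrm{id}}$. Write $w(\sigma) = \sum_{i=1}^n d(a_i,b_{\sigma(i)})$. The key identity to establish is
\[ w(\sigma) - w(\mathrm{id}) = \sum_{C} \operatorname{gain}(C), \]
where the sum runs over the nontrivial cycles of $\sigma$. Here a cycle $(i_1\, i_2\, \cdots\, i_m)$ of $\sigma$, meaning $\sigma(i_j) = i_{j+1}$ with $i_{m+1} = i_1$, is read as the directed red cycle $C = (a_{i_1},\dots,a_{i_m})$. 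The identity follows by splitting $w(\sigma)-w(\mathrm{id}) = \sum_i \bigl(d(a_i,b_{\sigma(i)}) - d(a_i,b_i)\bigr)$ over the orbits of $\sigma$. Fixed points contribute $0$. On each orbit of length $m \ge 2$, the partial sum is exactly $\operatorname{gain}(C)$ by the definition of cycle gain.

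For the forward direction, assume $M$ is max-sum and let $C = (a_{i_1},\dots,a_{i_m})$ be any directed red cycle with distinct indices and $2 \le m \le n$. Define the cyclic permutation $\sigma_C$ by $\sigma_C(i_j) = i_{j+1}$, fixing every other index. Then $M_{\sigma_C}$ is a perfect matching, namely the one obtained by shifting the blue partners along $C$ as in Figure~\ref{fig:cycle-gain}. By the identity, $w(\sigma_C) - w(\mathrm{id}) = \operatorname{gain}(C)$, and maximality of $M$ gives $\operatorname{gain}(C) \le 0$. This mirrors the exchange argument in the proof of Lemma~\ref{lem:3-local-H-intersection}.

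For the converse, assume every directed red cycle has nonpositive gain and let $M_\sigma$ be an arbitrary perfect matching. Decompose $\sigma$ into disjoint cycles. Each nontrivial cycle is a directed red cycle on distinct indices with length between $2$ and $n$, so its gain is $\le 0$. Summing via the identity gives $w(\sigma) \le w(\mathrm{id})$, so $M$ is max-sum.

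There is no real obstacle. The only step needing care is the bookkeeping in the identity: the orientation convention, that $a_{i_j}$ is sent to $b_{i_{j+1}}$, must match the definition of $\operatorname{gain}$. One must also check that each orbit contributes precisely one cycle gain, with fixed points contributing nothing. Properties P1–P4 are not needed, since the argument is purely combinatorial and holds for any real weight function.
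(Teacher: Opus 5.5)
Your proposal is correct and follows the paper's proof: for necessity, both compare $M$ with the matching obtained by cyclically shifting blue partners along $C$, and for sufficiency, both decompose the permutation $\sigma$ into disjoint cycles whose contributions are exactly the cycle gains. Your remark that P1--P4 are unnecessary matches the paper's own note after the theorem.
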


\begin{proof}
 Suppose that $M$ is max-sum. Let $C = (a_{i_1}, a_{i_2}, \dots, a_{i_m})$ be a directed red cycle. Consider the matching $M_C$ obtained from $M$ by fixing all pairs outside the cycle and replacing the pairs $(a_{i_1}, b_{i_1}), \dots, (a_{i_m}, b_{i_m})$ with $(a_{i_1},b_{i_2}), \dots, (a_{i_{m-1}},b_{i_m}), (a_{i_m}, b_{i_1})$. Define \[ \operatorname{cost}(M) = \sum_{(a,b) \in M} d(a,b). \] Since $M$ is max-sum, we obtain \[ \operatorname{cost}(M_C) \le \operatorname{cost}(M) ~ \Longleftrightarrow ~ \sum_{j=1}^m d(a_{i_j},b_{i_{j+1}}) \le \sum_{j=1}^m d(a_{i_j},b_{i_j}) ~ \Longleftrightarrow ~ \operatorname{gain}(C) \le 0, \] proving the necessity.

 Conversely, suppose that $\operatorname{gain}(C) \le 0$ for every directed red cycle $C$. Let $M^*$ be any other red-blue perfect matching. Then there is a permutation $\sigma$ of $\{1, \dots, n\}$ such that \[ M^* = \{ (a_i, b_{\sigma(i)}) : 1 \le i \le n \}. \] Decompose $\sigma$ into disjoint cycles. If $(i_1,i_2,\dots,i_m)$ is one nontrivial cycle of $\sigma$, then its contribution to \[ \sum_{i=1}^n [d(a_i,b_{\sigma(i)}) - d(a_i,b_i)] \] is exactly \[ \sum_{j=1}^m [d(a_{i_j},b_{i_{j+1}}) - d(a_{i_j},b_{i_j})] = \operatorname{gain}(a_{i_1}, a_{i_2}, \dots, a_{i_m}), \] where $i_{m+1} = i_1$. Fixed points of $\sigma$ contribute $0$. Therefore, by decomposing $\sigma$ into disjoint cycles and using our assumption, we have \[ \sum_{i=1}^n [d(a_i,b_{\sigma(i)}) - d(a_i,b_i)] = \sum_{i=1}^n d(a_i,b_{\sigma(i)}) - \sum_{i=1}^n d(a_i,b_i) \le 0 \] from which \[ \sum_{i=1}^n d(a_i,b_{\sigma(i)}) \le \sum_{i=1}^n d(a_i,b_i). \] Since $M^*$ was arbitrary, we conclude that $M$ is max-sum.
\end{proof} 

Note that Theorem \ref{thm:algebraic-cycle-characterization} does not need $d$ to be continuous, the properties P1-P4, or even to be a semimetric.

Geometrically, having a non-empty cyclic intersection of $H$-sets is a sufficient condition for a matching to be max-sum, as we now show.

\begin{lemma}
\label{lem:H-intersection-max-sum}
 Let $M = \{ (a_i,b_i) \}_{i=1}^n$ be a matching of $R \cup B$. If \[ \bigcap_{j=1}^m H(a_{i_j},a_{i_{j+1}}) \neq \emptyset \] for every directed red cycle $C = (a_{i_1}, \dots, a_{i_m})$ (with $2 \le m \le n)$, then $M$ is a max-sum matching. See Figure \ref{fig:cyclic-certificate}.
\end{lemma}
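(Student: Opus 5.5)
The plan is to reduce to Theorem \ref{thm:algebraic-cycle-characterization}: it suffices to show that $\operatorname{gain}(C) \le 0$ for every directed red cycle $C = (a_{i_1},\dots,a_{i_m})$. Fix such a cycle. By hypothesis we may choose a point $x$ in $\bigcap_{j=1}^m H(a_{i_j},a_{i_{j+1}})$, with indices read cyclically ($i_{m+1}=i_1$). Unfolding the definition of $H(a_{i_j},a_{i_{j+1}})$ at this $x$ gives, for every $j$,
\[
d(a_{i_j},b_{i_{j+1}}) - d(a_{i_{j+1}},b_{i_{j+1}}) \le d(a_{i_j},x) - d(a_{i_{j+1}},x).
\]

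Next we sum these $m$ inequalities over $j=1,\dots,m$. On the right-hand side the sum $\sum_j \bigl(d(a_{i_j},x) - d(a_{i_{j+1}},x)\bigr)$ telescopes around the cycle to $0$. On the left-hand side, reindexing the subtracted terms gives $\sum_j d(a_{i_{j+1}},b_{i_{j+1}}) = \sum_j d(a_{i_j},b_{i_j})$. The left side is therefore exactly
\[
\sum_{j=1}^m \bigl(d(a_{i_j},b_{i_{j+1}}) - d(a_{i_j},b_{i_j})\bigr) = \operatorname{gain}(C).
\]
Hence $\operatorname{gain}(C)\le 0$. Since $C$ was an arbitrary directed red cycle, Theorem \ref{thm:algebraic-cycle-characterization} shows that $M$ is max-sum.

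There is no real obstacle. The only point needing care is the indexing: the threshold in $H(a_i,a_j)$ uses $b_j$, the partner of the \emph{second} red point. This is exactly what makes the terms $d(a_{i_j},b_{i_{j+1}})$ appear, so that the sum of the thresholds is the cycle gain. As with Theorem \ref{thm:algebraic-cycle-characterization}, the argument never uses continuity or properties P1--P4 beyond $d$ being real-valued. For $m=2$ the hypothesis is just the $2$-local condition $H(a_i,a_j)\cap H(a_j,a_i)\neq\emptyset$, and the same computation applies.
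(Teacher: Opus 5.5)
Your proof is correct and follows the paper's argument exactly. Like the paper, you choose $x$ in the cyclic intersection, sum the $m$ defining inequalities so that the right-hand side telescopes to $0$ and the left-hand side reindexes to $\operatorname{gain}(C)$, and then invoke Theorem~\ref{thm:algebraic-cycle-characterization}.
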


\begin{figure}[t]
\centering
\begin{tikzpicture}[scale=1.0]

\node[redpt, label=above:$a_1$] (a1) at (0,3.1) {};
\node[redpt, label=below left:$a_2$] (a2) at (-2.9,-1.6) {};
\node[redpt, label=below right:$a_3$] (a3) at (2.9,-1.6) {};

\draw[dircycle] (a1) -- (a3);
\draw[dircycle] (a3) -- (a2);
\draw[dircycle] (a2) -- (a1);

\fill[regionA] (0,0.8) ellipse (2.3 and 1.5);
\fill[regionB] (-1.1,-0.2) ellipse (2.2 and 1.4);
\fill[regionC] (1.1,-0.2) ellipse (2.2 and 1.4);

\draw[green!50!black] (0,0.8) ellipse (2.3 and 1.5);
\draw[orange!70!black] (-1.1,-0.2) ellipse (2.2 and 1.4);
\draw[cyan!60!black] (1.1,-0.2) ellipse (2.2 and 1.4);

\node[circle, fill=black, inner sep=1.5pt, label=below:$x$] at (0,0.2) {};

\node[lab] at (0,1.8) {$H(a_1,a_2)$};
\node[lab] at (-1.6,-1.2) {$H(a_2,a_3)$};
\node[lab] at (1.6,-1.2) {$H(a_3,a_1)$};

\node[note, align=center] at (0,-3.1)
{If \(x\in H(a_1,a_2)\cap H(a_2,a_3)\cap H(a_3,a_1)\),\\
the three inequalities telescope and yield \(\operatorname{gain}(C)\le 0\).};

\end{tikzpicture}
\caption{A cyclic intersection certificate for a directed triangle \(C=(a_1,a_2,a_3)\).}
\label{fig:cyclic-certificate}
\end{figure}

\begin{proof}
 By Theorem \ref{thm:algebraic-cycle-characterization}, it is enough to prove that $\operatorname{gain}(C) \le 0$ for every directed red cycle $C$. Fix $C = (a_{i_1}, a_{i_2}, \dots, a_{i_m})$. Choose \[ x \in \bigcap_{j=1}^m H(a_{i_j},a_{i_{j+1}}). \] Then for every $j = 1, \dots, m$, \[ x \in H(a_{i_j}, a_{i_{j+1}}) ~ \Longrightarrow ~ d(a_{i_j},b_{i_{j+1}}) - d(a_{i_{j+1}},b_{i_{j+1}}) \le d(a_{i_j},x) - d(a_{i_{j+1}},x). \] Adding these $m$ inequalities yields \[ \sum_{j=1}^m [d(a_{i_j},b_{i_{j+1}}) - d(a_{i_{j+1}},b_{i_{j+1}})] \le \sum_{j=1}^m [d(a_{i_j},x) - d(a_{i_{j+1}},x)] = 0. \] Noting that $i_{m+1} = i_1$ and relabeling, we obtain \[ \sum_{j=1}^m [d(a_{i_j},b_{i_{j+1}}) - d(a_{i_j},b_{i_j})] \le 0. \] That is, $\operatorname{gain}(C) \le 0$. Since $C$ was arbitrary, $\operatorname{gain}(C) \le 0$ for every directed red cycle $C$. Therefore, by Theorem \ref{thm:algebraic-cycle-characterization}, $M$ is max-sum.
\end{proof} 

\subsection{Bichromatic matching equilibrium}
\label{subsec:bic-2}

We say that a matching $M = \{ (a_i,b_i) : a_i \in R, b_i \in B, 1 \le i \le n \}$ is \textit{balanced} or \textit{in equilibrium} if the total sum of the lengths defined by the segments $a_i b_i$ is invariant under any permutation of points of the same color. Rigorously, given a weight function $d : R \times B \to \mathbb{R}$ in the plane, we say $M$ is balanced if \[ \sum_{i = 1}^n d(a_i,b_i) = \sum_{i = 1}^n d(a_i,b_{\sigma(i)}) \] for each of the $n!$ permutations $\sigma : \{ 1, 2, \dots, n \} \to \{ 1, 2, \dots, n \}$.

First, we characterize the property of equilibrium in a given matching using cycle-gains.

\begin{theorem}
\label{thm:balance-cycle-characterization}
 The matching $M = \{ (a_i,b_i) : 1 \le i \le n \}$ is balanced if and only if $\operatorname{gain}(C) = 0$ for every directed red cycle $C = (a_{i_1},a_{i_2},\dots,a_{i_m})$ with $2 \le m \le n$.
\end{theorem}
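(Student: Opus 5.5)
The plan is to reuse the cycle decomposition from the proof of Theorem \ref{thm:algebraic-cycle-characterization} and prove the two directions separately. For each permutation $\sigma$ of $\{1,\dots,n\}$ write
\[ \Delta(\sigma) = \sum_{i=1}^n [d(a_i,b_{\sigma(i)}) - d(a_i,b_i)], \]
so that $M$ is balanced exactly when $\Delta(\sigma)=0$ for every $\sigma$. The key identity, already established in the converse part of that proof, is that $\Delta(\sigma)$ equals the sum of $\operatorname{gain}(C)$ over the nontrivial cycles $C$ of $\sigma$, since fixed points contribute $0$. This identity uses only the definition of $\operatorname{gain}$, so none of P1--P4 is needed here either.

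For necessity, assume $M$ is balanced and fix a directed red cycle $C=(a_{i_1},\dots,a_{i_m})$ with $2\le m\le n$. Define $\sigma_C$ as the permutation sending $i_j\mapsto i_{j+1}$ (indices mod $m$) and fixing every other index. This is the permutation realizing the matching $M_C$ from the proof of Theorem \ref{thm:algebraic-cycle-characterization}. Its only nontrivial cycle is $C$, so $\Delta(\sigma_C)=\operatorname{gain}(C)$. Balance forces $\Delta(\sigma_C)=0$, and therefore $\operatorname{gain}(C)=0$.

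For sufficiency, assume every directed red cycle has zero gain and let $\sigma$ be arbitrary. We decompose $\sigma$ into disjoint cycles. Each nontrivial cycle $(i_1,\dots,i_m)$ gives a directed red cycle with $2\le m\le n$, and its contribution to $\Delta(\sigma)$ is exactly its gain, which is $0$. Summing gives $\Delta(\sigma)=0$, so $\sum_i d(a_i,b_{\sigma(i)})=\sum_i d(a_i,b_i)$ for every $\sigma$, which is the definition of balanced.

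An alternative for sufficiency is to apply Theorem \ref{thm:algebraic-cycle-characterization} twice: zero gain gives $\operatorname{gain}\le 0$, so $M$ is max-sum, and applying the same argument with $-d$ shows $M$ is also min-sum. The direct route above is shorter.

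There is no substantial obstacle. The only point needing care is bookkeeping: the directed cycle $C$ must correspond to the specific cyclic shift $i_j\mapsto i_{j+1}$, with the orientation matching the definition of $\operatorname{gain}$. One should also note that the $2$-cycles ($m=2$) are covered, and that cycles of every length up to $n$ actually occur as cycles of permutations of $\{1,\dots,n\}$, so the hypotheses in both directions range over exactly the same set of cycles.
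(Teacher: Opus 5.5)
Your proof is correct and follows the paper's argument essentially verbatim. Necessity compares $M$ with the cyclically shifted matching $M_C$ (your $\sigma_C$), and sufficiency decomposes an arbitrary $\sigma$ into disjoint cycles whose gains sum to $\Delta(\sigma)$. Your alternative of applying Theorem~\ref{thm:algebraic-cycle-characterization} to both $d$ and $-d$ is also valid, since that theorem uses no property of $d$.
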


\begin{proof}
 Suppose first that $M$ is balanced. Let $C = (a_{i_1},a_{i_2},\dots,a_{i_m})$ be a directed red cycle. Let $M_C$ be the matching obtained from $M$ by keeping all pairs outside $C$ fixed and replacing \[ (a_{i_1},b_{i_1}), \dots, (a_{i_m},b_{i_m}) \quad \text{with} \quad (a_{i_1},b_{i_2}), \dots, (a_{i_{m-1}},b_{i_m}), (a_{i_m},b_{i_1}). \] Since $M$ is balanced, $M$ and $M_C$ have the same total weight. Cancellation of the pairs outside $C$ yields \[ \sum_{j=1}^m d(a_{i_j},b_{i_{j+1}}) = \sum_{j=1}^m d(a_{i_j},b_{i_{j}}). \] Therefore, $\operatorname{gain}(C) = 0$.

 Conversely, suppose that $\operatorname{gain}(C) = 0$ for every directed red cycle $C$. Let $M^*$ be any red-blue perfect matching. Then there exists a permutation $\sigma$ of $\{1, \dots, n\}$ such that $M^* = \{ (a_i,b_{\sigma(i)}) : 1 \le i \le n \}$. Decompose $\sigma$ into disjoint cycles. Each nontrivial cycle contributes exactly the gain of the corresponding directed red cycle, and fixed points contribute 0. Since every cycle gain is 0, we obtain \[ \sum_{i=1}^n [d(a_i,b_{\sigma(i)}) - d(a_i,b_i)] = 0. \] Hence \[ \sum_{i=1}^n d(a_i,b_{\sigma(i)}) = \sum_{i=1}^n d(a_i,b_i). \] Since $M^*$ was arbitrary, every red-blue perfect matching has the same total weight. Thus $M$ is balanced.
\end{proof} 

Now, we characterize the equilibrium property in terms of the distance matrix of the matching.

\begin{theorem}
\label{thm:balance-distance-matrix-characterization}
 Let $D = (D_{ij})$ be the $n \times n$ distance matrix defined by $D_{ij} = d(a_i,b_j)$. The following are equivalent:
 \begin{enumerate}
     \item $M$ is balanced.
     \item For every $i,k \in \{1, \dots, n\}$ and every $j,\ell \in \{1,\dots,n\}$, \[ D_{ij} + D_{k\ell} = D_{i\ell} + D_{kj}. \]
     \item There exist real numbers $\alpha_1, \dots, \alpha_n$ and $\beta_1, \dots, \beta_n$ such that $D_{ij} = \alpha_i + \beta_j$ for every $i,j$. Equivalently, \[ d(a_i,b_j) = \alpha_i + \beta_j \] for every red point $a_i$ and every blue point $b_j$.
 \end{enumerate}
\end{theorem}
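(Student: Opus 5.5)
I will prove the cycle of implications $1 \Rightarrow 2 \Rightarrow 3 \Rightarrow 1$, using Theorem \ref{thm:balance-cycle-characterization} to pass between balance and vanishing cycle gains.

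For $1 \Rightarrow 2$, fix $i,k,j,\ell$. If $i = k$ or $j = \ell$ the identity $D_{ij}+D_{k\ell} = D_{i\ell}+D_{kj}$ is trivial, so assume $i \neq k$ and $j \neq \ell$. I would not use only 2-cycles here, because the pairs $(a_i,b_j)$ and $(a_k,b_\ell)$ need not be edges of $M$. Instead, I use balance directly. Choose a permutation $\sigma$ with $\sigma(i) = j$ and $\sigma(k) = \ell$; this is possible since $i\neq k$ and $j \neq \ell$. Let $\sigma' = \sigma \circ (i\,k)$, so that $\sigma'(i)=\ell$, $\sigma'(k)=j$, and $\sigma'$ agrees with $\sigma$ elsewhere. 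Balance gives $\sum_r D_{r\sigma(r)} = \sum_r D_{r\sigma'(r)}$, since both equal $\sum_r D_{rr}$. The terms with $r \notin\{i,k\}$ cancel, which leaves exactly $D_{ij}+D_{k\ell} = D_{i\ell}+D_{kj}$.

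For $2 \Rightarrow 3$, set $\beta_j = D_{1j}$ and $\alpha_i = D_{i1} - D_{11}$. Applying condition 2 with $k=1$ and $\ell=1$ gives $D_{ij} + D_{11} = D_{i1} + D_{1j}$, that is, $D_{ij} = \alpha_i+\beta_j$.

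For $3 \Rightarrow 1$, take any permutation $\sigma$. Then $\sum_i D_{i\sigma(i)} = \sum_i \alpha_i + \sum_i \beta_{\sigma(i)} = \sum_i\alpha_i+\sum_j\beta_j$, which does not depend on $\sigma$, so $M$ is balanced. Alternatively, each cycle gain telescopes to $\sum_j(\beta_{i_{j+1}}-\beta_{i_j}) = 0$, and Theorem \ref{thm:balance-cycle-characterization} applies.

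The only delicate point is the first implication: condition 2 involves arbitrary entries of $D$, not just entries adjacent to the matching. I handle it with the transposition argument above rather than with cycle gains.
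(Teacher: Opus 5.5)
Your proposal is correct and follows essentially the same route as the paper: the cycle $1 \Rightarrow 2 \Rightarrow 3 \Rightarrow 1$, with $1 \Rightarrow 2$ via two permutations differing by the transposition $(i\,k)$, $2 \Rightarrow 3$ via condition 2 with $k=\ell=1$, and $3 \Rightarrow 1$ by summing. The differences are cosmetic. You normalize $\alpha_i = D_{i1}-D_{11}$, $\beta_j = D_{1j}$ rather than the paper's $\alpha_i = D_{i1}$, $\beta_j = D_{1j}-D_{11}$, and you explicitly dispose of the trivial cases $i=k$ or $j=\ell$, which the paper leaves implicit.
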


\begin{proof}
 We prove (1) $\Rightarrow$ (2). Assume $M$ is balanced. Fix $i \neq k$ and $j \neq \ell$. Choose two permutations $\sigma$ and $\tau$ which agree everywhere except at $i$ and $k$, and such that \[ \sigma(i) = j, \quad \sigma(k) = \ell, \] while \[ \tau(i) = \ell, \quad \tau(k) = j. \] Since $M$ is balanced, the matching defined by $\sigma$ and the matching defined by $\tau$ have the same total weight. Moreover, all terms cancel except those involving $i$ and $k$. Therefore, \[ d(a_i,b_j) + d(a_k,b_\ell) = d(a_i,b_\ell) + d(a_k,b_j). \] Thus \[ D_{ij} + D_{k\ell} = D_{i\ell} + D_{kj}. \]

 Now we prove (2) $\Rightarrow$ (3). Assume every $2 \times 2$ alternating difference vanishes. Define  \[ \alpha_i = D_{i1}, \quad \beta_j = D_{1j} - D_{11}. \] Using condition (2) with indices $i,1,j,1$, we obtain \[ D_{ij} + D_{11} = D_{i1} + D_{1j}. \] Hence \[ D_{ij} = D_{i1} + D_{1j} - D_{11} = \alpha_i + \beta_j. \] So $D$ has additive form.

 Finally, suppose (3) holds. For any permutation $\sigma$, \[ \sum_{i=1}^n D_{i,\sigma(i)} = \sum_{i=1}^n (\alpha_i + \beta_{\sigma(i)}) = \sum_{i=1}^n \alpha_i + \sum_{j=1}^n \beta_j. \] Note that the right-hand side is independent of $\sigma$. Therefore every red-blue perfect matching has the same total weight. Hence $M$ is balanced.
\end{proof} 

Finally, we can characterize the equilibrium property in terms of certain level-sets, thus adding a geometric interpretation of the balance. Assume that the weight function is induced by a function $d : \mathbb{R}^2 \times \mathbb{R}^2 \to \mathbb{R}$. For $i,k \in \{ 1, \dots, n \}$, let us define the red-pair distance-difference function \[ \Phi_{ik}(x) = d(a_i,x) - d(a_k,x). \] For $j,\ell \in \{ 1, \dots, n \}$, let us define the blue-pair distance-difference function \[ \Psi_{j\ell}(x) = d(x,b_j) - d(x,b_\ell). \]

\begin{theorem}
\label{thm:balance-level-set-characterization}
 The matching $M$ is balanced if and only if, for every pair of red points $a_i,a_k$, all blue points lie on a common level set of $\Phi_{ik}$. That is, for every $i,k$, there exists a real number $\lambda_{ik}$ such that \[ B \subset \{ x \in \mathbb{R}^2 : \Phi_{ik}(x) = \lambda_{ik} \}. \] Equivalently, $M$ is balanced if and only if, for every pair of blue points $b_j,b_\ell$, all red points lie on a common level set of $\Psi_{j\ell}$. That is, for every $j,\ell$, there exists a real number $\mu_{j\ell}$ such that \[ R \subset \{ x \in \mathbb{R}^2 : \Psi_{j\ell}(x) = \mu_{j\ell} \}. \]
\end{theorem}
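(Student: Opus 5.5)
The plan is to reduce everything to condition (2) of Theorem \ref{thm:balance-distance-matrix-characterization}, the vanishing of every $2\times 2$ alternating difference $D_{ij}+D_{k\ell}-D_{i\ell}-D_{kj}$. Rearranging this expression in two ways shows that it encodes both level-set conditions at once. Grouping by red index,
\[ D_{ij}+D_{k\ell}-D_{i\ell}-D_{kj} = \bigl(d(a_i,b_j)-d(a_k,b_j)\bigr)-\bigl(d(a_i,b_\ell)-d(a_k,b_\ell)\bigr) = \Phi_{ik}(b_j)-\Phi_{ik}(b_\ell). \]
Grouping by blue index instead,
\[ D_{ij}+D_{k\ell}-D_{i\ell}-D_{kj} = \bigl(d(a_i,b_j)-d(a_i,b_\ell)\bigr)-\bigl(d(a_k,b_j)-d(a_k,b_\ell)\bigr) = \Psi_{j\ell}(a_i)-\Psi_{j\ell}(a_k). \]
Here $d(a_i,b_j)$ is read as $d(b_j,a_i)$ where needed. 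The weight function is only evaluated on red--blue pairs, so the two expressions agree as written.

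For the first equivalence, assume $M$ is balanced. By Theorem \ref{thm:balance-distance-matrix-characterization}, condition (2) holds. The first identity then gives $\Phi_{ik}(b_j)=\Phi_{ik}(b_\ell)$ for all $j,\ell$. So $\Phi_{ik}$ is constant on $B$, and we set $\lambda_{ik}=\Phi_{ik}(b_1)$. Conversely, if every $\Phi_{ik}$ is constant on $B$, the first identity makes every alternating difference vanish. This is condition (2), so $M$ is balanced. The case $i=k$ is trivial, since $\Phi_{ii}\equiv 0$ and $\lambda_{ii}=0$.

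The second equivalence follows the same way from the second identity. Balance is equivalent to $\Psi_{j\ell}(a_i)=\Psi_{j\ell}(a_k)$ for all $i,k$, that is, to each $\Psi_{j\ell}$ taking a common value $\mu_{j\ell}$ on $R$.

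There is no real obstacle here. The only point needing care is the symmetry convention: $\Psi$ is written as $d(x,b_j)$ while $D_{ij}=d(a_i,b_j)$. If $d$ is only a weight on $R\times B$, then $\Psi_{j\ell}(a_i)$ should be understood as $d(a_i,b_j)-d(a_i,b_\ell)$. Under P3 this is automatic. The proof uses none of P1--P4 or continuity, only the algebraic identity together with Theorem \ref{thm:balance-distance-matrix-characterization}.
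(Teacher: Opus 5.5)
Your proposal is correct and follows the paper's proof: both reduce to condition (2) of Theorem \ref{thm:balance-distance-matrix-characterization}, then rewrite the alternating difference once as $\Phi_{ik}(b_j)-\Phi_{ik}(b_\ell)$ and once as $\Psi_{j\ell}(a_i)-\Psi_{j\ell}(a_k)$. Your caveat about symmetry is harmless but not needed. By definition $\Phi_{ik}(b_j)=d(a_i,b_j)-d(a_k,b_j)$ and $\Psi_{j\ell}(a_i)=d(a_i,b_j)-d(a_i,b_\ell)$, so both already put the red point first, exactly as in $D_{ij}$.
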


\begin{figure}[t]
\centering
\begin{tikzpicture}[scale=1.0]

\begin{scope}[xshift=-3.5cm]
  \node[note, text width=5.5cm, align = center] at (0,-3.5) {(a) Red pair determines a common level set for blue points};

  \node[redpt, label=below:$a_i$] (ai) at (-2.2,0) {};
  \node[redpt, label=below:$a_k$] (ak) at ( 2.2,0) {};

  \draw[levcurve, blue!70!black]
    plot[domain=-1.65:1.65, samples=120, smooth]
      ({1.2 + 0.35*\x*\x}, {\x});

  \node[bluept, label=right:$b_1$] at (1.50, -1.10) {};
  \node[bluept, label=right:$b_2$] at (1.20,  0.00) {};
  \node[bluept, label=right:$b_3$] at (1.50,  1.10) {};

  \node[lab, align=center] at (0,-2.4)
    {\(\Phi_{ik}(x)=d(a_i,x)-d(a_k,x)=\lambda_{ik}\)};
\end{scope}

\begin{scope}[xshift=3.5cm]
  \node[note, text width=5.5cm, align=center] at (0,-3.5) {(b) Blue pair determines a common level set for red points};

  \node[bluept, label=below:$b_j$] (bj) at (-2.2,0) {};
  \node[bluept, label=below:$b_\ell$] (bl) at ( 2.2,0) {};

  \draw[levcurve, red!70!black]
    plot[domain=-1.65:1.65, samples=120, smooth]
      ({-1.2 - 0.35*\x*\x}, {\x});

  \node[redpt, label=left:$a_1$] at (-1.50, -1.10) {};
  \node[redpt, label=left:$a_2$] at (-1.20,  0.00) {};
  \node[redpt, label=left:$a_3$] at (-1.50,  1.10) {};

  \node[lab, align=center] at (0,-2.4)
    {\(\Psi_{j\ell}(x)=d(x,b_j)-d(x,b_\ell)=\mu_{j\ell}\)};
\end{scope}

\end{tikzpicture}
\caption{Equilibrium as a level-set condition.}
\label{fig:equilibrium-level-sets}
\end{figure}

\begin{proof}
 By Theorem \ref{thm:balance-distance-matrix-characterization}, $M$ is balanced if and only if \[ d(a_i,b_j) + d(a_k,b_\ell) = d(a_i,b_\ell) + d(a_k,b_j) \] for every $i,j,k,\ell$. Rearranging gives \[ d(a_i,b_j) - d(a_k,b_j) = d(a_i,b_\ell) - d(a_k,b_\ell). \] In terms of $\Phi_{ik}$, this says \[ \Phi_{ik}(b_j) = \Phi_{ik}(b_\ell) \] for every pair of blue points $b_j,b_\ell$. Therefore, for each fixed red pair $a_i,a_k$, the function $\Phi_{ik}$ is constant on $B$. Equivalently, all blue points lie on a common level set of $\Phi_{ik}$. See Figure \ref{fig:equilibrium-level-sets}(a).

 Similarly, the same equality can be arranged as \[ d(a_i,b_j) - d(a_i,b_\ell) = d(a_k,b_j) - d(a_k,b_\ell). \] In terms of $\Psi_{j\ell}$, this says \[ \Psi_{j\ell}(a_i) = \Psi_{j\ell}(a_k) \] for every pair of red points $a_i,a_k$. Therefore, for each fixed blue pair $b_j,b_\ell$, the function $\Psi_{j\ell}$ is constant on $R$. Equivalently, all red points lie on a common level set of $\Psi_{j\ell}$. See Figure \ref{fig:equilibrium-level-sets}(b).

 Thus the two level-set formulations are equivalent to $M$ being balanced.
\end{proof} 

In the language of $H$-sets, note that \[ H(a_i,a_k) = \{ x : \Phi_{ik}(x) \ge \Phi_{ik}(b_k) \} \] while \[ h(a_i,a_k) = \{ x : \Phi_{ik}(x) \le \Phi_{ik}(b_i) \}. \] Balancedness implies \[ \Phi_{ik}(b_1) = \Phi_{ik}(b_2) = \cdots = \Phi_{ik}(b_n). \] Therefore, in equilibrium, \[ \Phi_{ik}(b_i) = \Phi_{ik}(b_k), \] so the two boundary level sets \[ \Phi_{ik}(x) = \Phi_{ik}(b_i) \quad \text{and} \quad \Phi_{ik}(x) = \Phi_{ik}(b_k) \] coincide. More strongly, every blue point lies on the same level set: \[ B \subset \{ x : \Phi_{ik}(x) = \Phi_{ik}(b_i) \} \] Thus, for every red pair $a_i,a_k$, all blue points lie on the common boundary level set between $H(a_i,a_k)$ and $h(a_i,a_k)$. 

\subsubsection{Squared Euclidean distance}
\label{subsubsec:bic-2-1}

Assume now that $d(p,q) = \|p - q\|^2$. Let \[ U_R = \operatorname{span}\{ a_i - a_k : 1 \le i,k \le n \} \quad \text{and} \quad U_B = \operatorname{span}\{ b_j - b_\ell : 1 \le j,\ell \le n \}. \]

\begin{corollary}
\label{cor:balance-euclidean-squared}
 For the squared Euclidean distance, $M$ is balanced if and only if $U_R \perp U_B$. In other words, \[ (a_i - a_k) \cdot (b_j - b_\ell) = 0 \] for every $i,j,k,\ell$.
\end{corollary}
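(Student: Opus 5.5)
The plan is to reduce the corollary to the $2\times 2$ condition (2) of Theorem \ref{thm:balance-distance-matrix-characterization}. With $d(p,q)=\|p-q\|^2$, that theorem says $M$ is balanced if and only if
\[ D_{ij} + D_{k\ell} - D_{i\ell} - D_{kj} = 0 \]
for all indices $i,j,k,\ell$. So it suffices to compute this alternating sum explicitly for the squared Euclidean distance.

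Expanding $\|a-b\|^2 = \|a\|^2 - 2\,a\cdot b + \|b\|^2$, the squared-norm terms cancel in the alternating sum. Each of $\|a_i\|^2,\|a_k\|^2,\|b_j\|^2,\|b_\ell\|^2$ appears once with a plus sign and once with a minus sign. What remains is
\[ -2\bigl(a_i\cdot b_j + a_k\cdot b_\ell - a_i\cdot b_\ell - a_k\cdot b_j\bigr) = -2\,(a_i-a_k)\cdot(b_j-b_\ell). \]
Hence balance is equivalent to $(a_i-a_k)\cdot(b_j-b_\ell)=0$ for all $i,j,k,\ell$, which is the displayed form of the statement.

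It remains to identify this with $U_R\perp U_B$. If every spanning difference $a_i-a_k$ is orthogonal to every spanning difference $b_j-b_\ell$, then bilinearity of the inner product gives orthogonality of the full spans. Conversely, $U_R\perp U_B$ trivially implies orthogonality of the generators. Alternatively, one could argue through Theorem \ref{thm:balance-level-set-characterization}, since $\Phi_{ik}(x) = \|a_i\|^2-\|a_k\|^2 - 2(a_i-a_k)\cdot x$ is affine and $B$ lies on one level line of it. The matrix route is shorter, and I would use that.

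There is no real obstacle here. The only care needed is the sign bookkeeping in the expansion, and noting that the degenerate cases $i=k$ or $j=\ell$ are trivially consistent with both conditions.
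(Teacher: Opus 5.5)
Your proposal is correct and is essentially the paper's proof: both reduce balance to the vanishing of the $2\times 2$ alternating sums in Theorem~\ref{thm:balance-distance-matrix-characterization}, expand them to $-2(a_i-a_k)\cdot(b_j-b_\ell)$, and read off $U_R\perp U_B$. Your explicit bilinearity step, which passes from orthogonality of the generators to orthogonality of the spans, is a small detail the paper leaves implicit.
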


\begin{figure}[t]
\centering
\begin{tikzpicture}[scale=1.0]

\begin{scope}[xshift=0cm, yshift=0cm]
  \node[note] at (0,-3.0) {(a) Noncollapsed balanced configuration};

  \draw[gray] (-2.5,0) -- (2.5,0);
  \node[redpt, label=below:$a_1$] at (-1.8,0) {};
  \node[redpt, label=below:$a_2$] at (0,0) {};
  \node[redpt, label=below:$a_3$] at (1.8,0) {};

  \draw[gray] (0,-2.3) -- (0,2.3);
  \node[bluept, label=left:$b_1$] at (0,-1.6) {};
  \node[bluept, label=left:$b_2$] at (0, 0.3) {};
  \node[bluept, label=left:$b_3$] at (0, 1.8) {};

  \draw[axisarrow, red!70!black] (-1.2,0.45) -- (1.2,0.45);
  \node[lab, red!70!black] at (-0.5,0.8) {$U_R$};

  \draw[axisarrow, blue!70!black] (0.45,-1.2) -- (0.45,1.2);
  \node[lab, blue!70!black] at (1.0,-0.5) {$U_B$};
\end{scope}

\begin{scope}[xshift=-3.5cm, yshift=-6.4cm]
  \node[note] at (0,-3.0) {(b) Noncollinear \(R\) forces blue collapse};

  \node[redpt, label=above:$a_1$] at (0,2.0) {};
  \node[redpt, label=below left:$a_2$] at (-1.8,-1.2) {};
  \node[redpt, label=below right:$a_3$] at (1.8,-1.2) {};

  \node[bluept, label=right:{$b_1=b_2=b_3$}] at (0,0.2) {};
\end{scope}

\begin{scope}[xshift=3.5cm, yshift=-6.4cm]
  \node[note] at (0,-3.0) {(c) Noncollinear \(B\) forces red collapse};

  \node[bluept, label=above:$b_1$] at (0,2.0) {};
  \node[bluept, label=below left:$b_2$] at (-1.8,-1.2) {};
  \node[bluept, label=below right:$b_3$] at (1.8,-1.2) {};

  \node[redpt, label=right:{$a_1=a_2=a_3$}] at (0,0.2) {};
\end{scope}

\end{tikzpicture}
\caption{Squared Euclidean equilibrium.}
\label{fig:squared-euclidean}
\end{figure}

\begin{proof}
 For squared Euclidean distance, \[ D_{ij} = \| a_i - b_j \|^2. \] The $2 \times 2$ alternating difference is \[ D_{ij} + D_{k\ell} - D_{i\ell} - D_{kj}. \] Expanding, \[ \|a_i - b_j\|^2 + \|a_k - b_\ell\|^2 - \|a_i - b_\ell\|^2 - \|a_k - b_j\|^2 = -2(a_i - a_k) \cdot (b_j - b_\ell). \] By Theorem \ref{thm:balance-distance-matrix-characterization}, $M$ is balanced if and only if all these alternating differences vanish. Therefore $M$ is balanced if and only if \[ (a_i - a_k) \cdot (b_j - b_\ell) = 0 \] for every $i,j,k,\ell$, which is equivalent to $U_R \perp U_B$.
\end{proof} 

Consequently, by Corollary \ref{cor:balance-euclidean-squared}, in the squared Euclidean case:

\begin{enumerate}
    \item If $R$ contains three noncollinear points, then all blue points coincide. See Figure \ref{fig:squared-euclidean}(b).

    \item If $B$ contains three noncollinear points, then all red points coincide. See Figure \ref{fig:squared-euclidean}(c).

    \item If both color classes contain at least two distinct points and neither color class collapses to one point, then both $R$ and $B$ are collinear, and their supporting directions are perpendicular. See Figure \ref{fig:squared-euclidean}(a). Conversely, every configuration satisfying this orthogonality condition is balanced.
\end{enumerate}

\subsubsection{Euclidean distance}
\label{subsubsec:bic-2-2}

Assume now that $d(p,q) = \|p - q\|$. The main issue with this metric is that it yields hyperbolic level sets rather than affine hyperplanes. To characterize the equilibrium of red-blue matchings $M$, we focus on the number of \emph{distinct} red and blue locations, rather than the cardinality of $R$ and $B$. 

Clearly, if $R$ has exactly one distinct location or $B$ has exactly one distinct location, then $M$ is automatically balanced. Indeed, if all red points coincide at, say, $a$, then \[ \sum_{i} \| a_i - b_{\sigma(i)} \| = \sum_i \| a - b_{\sigma(i)} \| = \sum_j \| a - b_j \|, \] which is independent of $\sigma$. The blue-collapse case is symmetric.

Suppose $R$ collapses into two distinct points $p$ and $q$. By Theorem \ref{thm:balance-level-set-characterization}, $M$ is balanced if and only if all blue points lie on one common level set \[ \{ x \in \mathbb{R}^2 : \|p - x\| - \|q - x\| = \lambda \} \] for some real number $\lambda$. Note that, geometrically, this level set is
\begin{itemize}
    \item a branch of a hyperbola with foci $p,q$, if $0 < |\lambda| < \|p-q\|$;
    \item the perpendicular bisector of segment $pq$, if $\lambda = 0$;
    \item one of the two exterior rays of the line $pq$, if $|\lambda| = \|p-q\|$.
\end{itemize} The symmetric statement holds if $B$ has exactly two distinct locations: if $B$ collapses into two distinct points $p'$ and $q'$, then $M$ is balanced if and only if all red points lie on one common level set \[ \{ x \in \mathbb{R}^2 : \|x - p'\| - \|x - q'\| = \mu \}. \]

Before we study the characterization for the general case, we state and prove two elementary facts about Euclidean distance-difference level sets.

\begin{lemma}
\label{lem:tech-1}
 If $p,q,r \in \mathbb{R}^2$ are noncollinear, then for any real numbers $\lambda,\mu$, the system \[ \begin{cases} \|p-x\| - \|q-x\| = \lambda, \\ \|p-x\| - \|r-x\| = \mu \end{cases} \] has at most two solutions $x \in \mathbb{R}^2$.
\end{lemma}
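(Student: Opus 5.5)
The idea is to eliminate the square roots by introducing one auxiliary unknown, $s = \|p-x\| \ge 0$. The system then says that $x$ lies on the three circles
\[ \|p-x\| = s, \qquad \|q-x\| = s-\lambda, \qquad \|r-x\| = s-\mu, \]
with the side conditions $s-\lambda \ge 0$ and $s-\mu \ge 0$. Squaring gives
\[ \|x\|^2 - 2p\cdot x + \|p\|^2 = s^2, \]
and similarly $\|x\|^2 - 2q\cdot x + \|q\|^2 = (s-\lambda)^2$ and $\|x\|^2 - 2r\cdot x + \|r\|^2 = (s-\mu)^2$. We first check that squaring is harmless: squaring can only add spurious solutions, never lose genuine ones. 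Hence it suffices to bound the number of pairs $(x,s)$ solving the squared system, since each genuine solution $x$ determines $s$ uniquely.

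Next we subtract the first squared equation from the other two. This cancels both $\|x\|^2$ and $s^2$ and leaves two equations that are linear in $(x,s)$:
\[ 2(p-q)\cdot x = \|p\|^2 - \|q\|^2 + 2\lambda s - \lambda^2, \]
\[ 2(p-r)\cdot x = \|p\|^2 - \|r\|^2 + 2\mu s - \mu^2. \]
Because $p,q,r$ are noncollinear, the vectors $p-q$ and $p-r$ are linearly independent, so the $2\times 2$ matrix with rows $p-q$ and $p-r$ is invertible. So for each value of $s$ there is exactly one $x$, and it depends affinely on $s$: $x = u + s v$ for fixed vectors $u,v \in \mathbb{R}^2$.

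Finally we substitute $x = u + sv$ into the first squared equation $\|p - u - sv\|^2 = s^2$. This gives
\[ (\|v\|^2 - 1)s^2 - 2\,(p-u)\cdot v\, s + \|p-u\|^2 = 0, \]
a polynomial equation in $s$ of degree at most two. If this polynomial is not identically zero, it has at most two real roots $s$. Each root yields one $x$, so there are at most two solutions.

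The main obstacle is the degenerate case where the polynomial vanishes identically. That would require $\|v\|=1$, $(p-u)\cdot v = 0$ and $\|p-u\| = 0$, so $u = p$ with $\|v\|=1$. Then $x = p + sv$ and $\|p-x\| = |s|$, consistent with the first equation, so we must use the linear equations to rule this out. Substituting $x=p+sv$ into the first linear equation gives
\[ 2(p-q)\cdot p + 2s\,(p-q)\cdot v = \|p\|^2-\|q\|^2+2\lambda s-\lambda^2 \]
for all $s$. Comparing constant terms gives $\|p-q\|^2 = -\lambda^2$, which forces $p = q$ and contradicts noncollinearity. So the degenerate case cannot occur, and the system has at most two solutions.
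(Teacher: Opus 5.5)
Your route is the paper's: set $s=\|p-x\|$, subtract the squared equations to get two equations linear in $(x,s)$, use noncollinearity to write $x=u+sv$, and reduce to a quadratic in $s$. You are right that this quadratic might vanish identically, a case the paper's proof never addresses. However, your argument excluding it rests on a sign error in the linear equations. Subtracting $\|p-x\|^2=s^2$ from $\|q-x\|^2=(s-\lambda)^2=s^2-2\lambda s+\lambda^2$ gives
\[ 2(p-q)\cdot x=\|p\|^2-\|q\|^2-2\lambda s+\lambda^2, \]
not $+2\lambda s-\lambda^2$, and likewise for $\mu$. With the correct sign, comparing constant terms after substituting $x=p+sv$ yields $\|p-q\|^2=\lambda^2$, which is not a contradiction. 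It is exactly the case $|\lambda|=\|p-q\|$, in which the first level set is a ray of the line $pq$. For instance, take $p=(0,0)$, $q=(1,0)$, $\lambda=-1$, $v=(-1,0)$. Then every point $p+sv$ satisfies both the first squared equation and the squared $q$-equation for all $s$. So the $q$-equation alone cannot exclude the degenerate case; the noncollinearity of all three points has to enter.

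The repair is short. Also compare the coefficients of $s$, which gives $(q-p)\cdot v=\lambda$. Together with $\|v\|=1$ and $\|q-p\|=|\lambda|$, this is equality in Cauchy--Schwarz, so $q=p+\lambda v$. The second linear equation gives in the same way $r=p+\mu v$. Hence $p,q,r$ all lie on the line $p+\mathbb{R}v$, contradicting noncollinearity. With this correction your proof is complete, and it is more careful than the paper's, which simply asserts that the resulting equation in $\rho$ is a quadratic with at most two roots.
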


\begin{proof}
Put $\rho = \|p-x\|$. Then \[
\|q-x\| = \rho - \lambda, \qquad \|r-x\| = \rho - \mu. \]
Squaring and subtracting $\|p-x\|^2 = \rho^2$ from each equation gives \[ 2(q-p) \cdot x = \|q\|^2 - \|p\|^2 + 2\lambda\rho - \lambda^2, \] and \[ 2(r-p) \cdot x
= \|r\|^2 - \|p\|^2 + 2\mu\rho - \mu^2.\] Since $p,q,r$ are noncollinear, the vectors $q-p$ and $r-p$ are linearly independent. Hence these two linear equations determine $x$ as an affine function of $\rho$. Substituting this expression into $\|p-x\|^2 = \rho^2$ gives a quadratic equation in $\rho$. Therefore there are at most two possible values of $\rho$, and hence at most two possible points $x$.
\end{proof}

\begin{lemma}
\label{lem:tech-2}
 Let $u \neq v$, and let $L$ be a line. If the level set \[ \{ x : \|x - u\| - \|x - v\| = \lambda \} \] contains three distinct points in $L$, then either:
 \begin{enumerate}
     \item $\lambda = 0$, and $L$ is the perpendicular bisector of the segment $uv$; or
     \item $|\lambda| = \|u-v\|$, $L$ is the line through $u$ and $v$, and the three points lie on one of the two exterior rays determined by $u$ and $v$.
 \end{enumerate}
\end{lemma}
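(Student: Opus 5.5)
Parametrize the line as $L=\{x(t)=x_0+tw\}$ with $\|w\|=1$ and consider $g(t)=\|x(t)-u\|-\|x(t)-v\|-\lambda$. The goal is to show that $g$ has at most two zeros unless one of the two degenerate situations in the statement occurs. The natural tool is the squaring trick from the proof of Lemma \ref{lem:tech-1}: set $\rho=\|x-v\|$, so $\|x-u\|=\rho+\lambda$. Subtracting the squared equations gives
\[ 2(v-u)\cdot x=\|v\|^2-\|u\|^2+2\lambda\rho+\lambda^2 .\]
On $L$ the left side is an affine function of $t$, so $2\lambda\rho$ is affine in $t$.

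\textbf{Case $\lambda=0$.} The level set is the perpendicular bisector of $uv$, which is a line. It meets $L$ in at most one point unless $L$ equals the bisector. Three distinct points on $L$ therefore force alternative~1.

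\textbf{Case $\lambda\neq 0$.} On $L$ we get $\rho(t)=\alpha t+\beta$. On the other hand, $\rho(t)^2=\|x_0+tw-v\|^2=t^2+2t\,w\cdot(x_0-v)+\|x_0-v\|^2$, so
\[ (\alpha t+\beta)^2=t^2+2t\,w\cdot(x_0-v)+\|x_0-v\|^2 \]
holds at three distinct values of $t$. Two quadratic polynomials agreeing at three points are identical. Hence $\alpha^2=1$, and the quadratic $t^2+2t\,w\cdot(x_0-v)+\|x_0-v\|^2$ is a perfect square. Its discriminant is $4\bigl((w\cdot(x_0-v))^2-\|x_0-v\|^2\bigr)$, which vanishes only if $x_0-v$ is parallel to $w$, that is, only if $v\in L$.

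By the symmetric argument with the roles of $u$ and $v$ exchanged (set $\|x-u\|=\rho'$), we also get $u\in L$. So $L$ is the line through $u$ and $v$.

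On that line, write $x$ in the coordinate $s$ with $u$ at $0$ and $v$ at $\delta=\|u-v\|$. Then $g=|s|-|s-\delta|-\lambda$, and the function $|s|-|s-\delta|$ is strictly increasing on $[0,\delta]$ and constant ($\mp\delta$) on the two exterior rays. A value $\lambda$ attained at three points must therefore be $\pm\delta$, with all three points on the corresponding exterior ray (rays include endpoints, consistent with the paper's description). This is alternative~2.

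\textbf{Main obstacle.} The delicate step is justifying the "agree at three points, hence identical polynomials" conclusion. The relation $\rho=\alpha t+\beta$ only holds at the solution points. That is still enough: both sides are polynomials of degree at most $2$ in $t$ and agree at three distinct $t$. Care is also needed with the sign constraint $\rho+\lambda\ge 0$, but it is not needed for the upper bound.

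An alternative I would fall back on is a convexity argument. The function $\|x(t)-u\|$ is convex in $t$, and if $u\notin L$ it is strictly convex. So $g$ is a difference of convex functions, and a direct zero count is messier; the algebraic route is preferable.
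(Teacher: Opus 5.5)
Your proof is correct, and it takes a different route from the paper's.

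\textbf{The paper's route.} The paper argues geometrically. It classifies the level set into four cases:
\begin{enumerate}
\item it is empty when $|\lambda|>\|u-v\|$;
\item it is the perpendicular bisector when $\lambda=0$;
\item it is a branch of a nondegenerate hyperbola when $0<|\lambda|<\|u-v\|$;
\item by the equality case of the reverse triangle inequality, it lies on a closed exterior ray of the line $uv$ when $|\lambda|=\|u-v\|$.
\end{enumerate}
It then quotes the fact that a line meets a hyperbola branch in at most two points. So three points of $L$ force one of the two degenerate cases.

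\textbf{Your route.} You reuse the squaring trick of Lemma~\ref{lem:tech-1}. For $\lambda\neq 0$, the relation $\|x(t)-v\|=\alpha t+\beta$ holds at three zeros. Because both sides squared are polynomials of degree at most two, this upgrades to the identity $\|x(t)-v\|^2\equiv(\alpha t+\beta)^2$. Its vanishing discriminant forces $v\in L$ by the equality case of Cauchy--Schwarz, and symmetrically $u\in L$. The piecewise-linear analysis of $|s|-|s-\delta|$ on the line $uv$ then gives $|\lambda|=\|u-v\|$ and the exterior-ray condition. You handle the "only at solution points" issue correctly. You are also right that the sign constraint $\rho+\lambda\ge 0$ is not needed, since you only need an upper bound on the number of zeros.

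\textbf{Comparison.} The paper's proof is shorter, but it relies on classical facts about conics that it does not prove. Yours is self-contained and treats every $\lambda\neq 0$ uniformly. The cases $|\lambda|>\|u-v\|$ and $0<|\lambda|<\|u-v\|$ never have to be identified as empty or hyperbolic: they are excluded automatically once $L$ is forced to be the line $uv$. Your final one-dimensional step is essentially the computation the paper carries out later in the proof of Theorem~\ref{thm:balance-euclidean-3}.
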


\begin{proof}
The level set $\{x : \|x-u\| - \|x-v\| = \lambda\}$ is empty if $|\lambda| > \|u-v\|$. If $\lambda = 0$, it is the perpendicular bisector of $uv$. If
$0 < |\lambda| < \|u-v\|$, it is a branch of a nondegenerate hyperbola with foci $u,v$. Hence any line meets it in at most two points. Finally, if $|\lambda| = \|u-v\|$, equality in the reverse triangle inequality forces $x,u,v$ to be collinear, with $x$ lying on one of the two exterior rays determined
by $u$ and $v$.

Therefore, if such a level set contains three distinct points of a line $L$, the level set must be one of the two degenerate cases: either $\lambda = 0$ and $L$ is the perpendicular bisector of $uv$, or $|\lambda| = \|u-v\|$, $L$ is the line through $u$ and $v$, and the three points lie on one exterior ray.
\end{proof}

\begin{theorem}
\label{thm:balance-euclidean-3}
 Suppose $R$ and $B$ contain at least three distinct points each. $M$ is balanced if and only if all red points and blue points of $R \cup B$ lie on a common line $L$, and the two color classes are linearly separated on $L$. That is, after choosing an affine coordinate $t : L \to \mathbb{R}$, either \[ t(a_i) \le t(b_j), \quad \text{for all}~ i,j, \] or \[ t(b_j) \le t(a_i), \quad \text{for all}~ i,j. \] Equivalently, the convex hulls of the two color classes are intervals on the same line with disjoint interiors, possibly sharing one endpoint.
\end{theorem}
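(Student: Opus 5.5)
The plan is to prove both directions through the level-set characterization of Theorem \ref{thm:balance-level-set-characterization}. The sufficiency direction is a direct computation; necessity combines Lemma \ref{lem:tech-1}, to force collinearity, with Lemma \ref{lem:tech-2}, to identify which degenerate level set occurs.

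\emph{Sufficiency.} Suppose all points lie on $L$ with affine coordinate $t$ and $t(a_i)\le t(b_j)$ for all $i,j$. Then $\|a_i-b_j\| = t(b_j)-t(a_i)$, so $D_{ij}=\alpha_i+\beta_j$ with $\alpha_i=-t(a_i)$ and $\beta_j=t(b_j)$. Theorem \ref{thm:balance-distance-matrix-characterization} then gives balance. The reversed ordering is symmetric.

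\emph{Necessity, collinearity.} Assume $M$ is balanced.

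\begin{itemize}
\item First show that the distinct red locations are collinear. Suppose three of them, $p,q,r$, are noncollinear.
\item By Theorem \ref{thm:balance-level-set-characterization}, every blue point satisfies $\|p-x\|-\|q-x\|=\lambda$ and $\|p-x\|-\|r-x\|=\mu$ for fixed $\lambda,\mu$.
\item Lemma \ref{lem:tech-1} then allows at most two blue locations. This contradicts the assumption that $B$ has at least three distinct points.
\item Hence the red locations lie on a line $L_R$. Symmetrically, using $\Psi_{j\ell}$, the blue locations lie on a line $L_B$.
\end{itemize}

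\emph{Necessity, identifying the line.} Take two distinct red points $u,v\in L_R$. The three or more distinct blue points lie on $L_B$ and on a common level set of $\Phi_{uv}$, so Lemma \ref{lem:tech-2} applies. There are two cases.

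\begin{itemize}
\item \emph{Bisector case.} Here $L_B$ is the perpendicular bisector of $uv$. Since $R$ has at least three distinct points, choose another red $w$. The pair $(u,w)$ must also satisfy the alternatives of Lemma \ref{lem:tech-2} with the same line $L_B$. The line through $u,w$ is $L_R\neq L_B$, so $L_B$ must be the perpendicular bisector of $uw$ as well. This forces $v=w$ (reflection of $u$ across $L_B$), a contradiction.
\item \emph{Exterior case.} The other alternative is that $L_B=L_R=:L$ and all blue points lie on one exterior ray of $\{u,v\}$, with $|\lambda|=\|u-v\|$.
\end{itemize}

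\emph{Necessity, separation.} Now everything lies on $L$. Apply the exterior-ray conclusion to the extreme red pair: take $u,v$ with minimal and maximal $t$ among the red points. All blue points then lie on one exterior ray, say $t\ge t(v)$, which gives $t(a_i)\le t(v)\le t(b_j)$. One subtlety is that blue points could lie on different rays for different red pairs. Using the extreme pair avoids this, since it yields a single ray containing all blue points. The convex-hull reformulation is then immediate.

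The main obstacle is ruling out the bisector alternative of Lemma \ref{lem:tech-2}, which requires care with the choice of the third red point. The argument above handles it by requiring simultaneous compatibility for two red pairs sharing $u$. One should also check that the lemma's hypothesis of three distinct points on the line is met, which holds because only distinct locations are counted.
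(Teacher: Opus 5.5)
Your proof is correct and follows essentially the same route as the paper: Lemma~\ref{lem:tech-1} forces collinearity, Lemma~\ref{lem:tech-2} excludes the perpendicular-bisector alternative to get a common line, an extreme pair gives separation, and the additive form gives sufficiency. The differences are cosmetic. You swap the roles of the colors in the last two steps. You rule out the bisector case by reflection uniqueness with a third red point, a more explicit version of the paper's remark that three distinct collinear points cannot share one perpendicular bisector. For separation you reuse Lemma~\ref{lem:tech-2}, where the paper writes out the piecewise formula for $\|x-u\|-\|x-v\|$ on $L$.
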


\begin{figure}[t]
\centering
\begin{tikzpicture}[scale=1.0]

\begin{scope}[xshift=-4.2cm]
  \node[note] at (0,-3.2) {(a) Two red locations: blue points on one level set};

  \node[redpt, label=below:$p$] at (-2.2,0) {};
  \node[redpt, label=below:$q$] at ( 2.2,0) {};

  \draw[levcurve, blue!70!black]
    plot[domain=-1.7:1.7, samples=120, smooth]
      ({1.15 + 0.33*\x*\x}, {\x});

  \node[bluept, label=right:$b_1$] at (1.45,-1.10) {};
  \node[bluept, label=right:$b_2$] at (1.15,0.00) {};
  \node[bluept, label=right:$b_3$] at (1.45,1.10) {};

  \node[lab, align=center] at (0,-2.4)
    {\(\|p-x\|-\|q-x\|=\lambda\)};
\end{scope}

\begin{scope}[xshift=4.2cm]
  \node[note] at (0,-3.2) {(b) At least three distinct points in each color class};

  \draw[gray] (-3.0,0) -- (3.0,0);
  \draw[axisarrow] (-3.0,-0.6) -- (3.0,-0.6);
  \node[lab] at (3.25,-0.6) {$t$};

  \node[redpt, label=above:$a_1$] at (-2.4,0) {};
  \node[redpt, label=above:$a_2$] at (-1.8,0) {};
  \node[redpt, label=above:$a_3$] at (-1.2,0) {};

  \node[bluept, label=above:$b_1$] at (0.8,0) {};
  \node[bluept, label=above:$b_2$] at (1.6,0) {};
  \node[bluept, label=above:$b_3$] at (2.4,0) {};

  \draw[red!70!black, thick] (-2.55,-0.95) -- (-1.05,-0.95);
  \draw[red!70!black, thick] (-2.55,-1.05) -- (-2.55,-0.85);
  \draw[red!70!black, thick] (-1.05,-1.05) -- (-1.05,-0.85);
  \node[lab, red!70!black] at (-1.8,-1.25) {red interval};

  \draw[blue!70!black, thick] (0.65,-0.95) -- (2.55,-0.95);
  \draw[blue!70!black, thick] (0.65,-1.05) -- (0.65,-0.85);
  \draw[blue!70!black, thick] (2.55,-1.05) -- (2.55,-0.85);
  \node[lab, blue!70!black] at (1.6,-1.25) {blue interval};

  \node[lab, align=center] at (0,-2.4)
    {\(t(a_i)\le t(b_j)\) for all \(i,j\)};
\end{scope}

\end{tikzpicture}
\caption{Euclidean equilibrium.}
\label{fig:euclidean-equilibrium}
\end{figure}

\begin{proof}
 We first prove necessity. Assume that $M$ is balanced. By Theorem \ref{thm:balance-level-set-characterization}, for every pair of red points $p,q \in R$, all blue points lie on a common level set of \[ x \mapsto \|p-x\| - \|q-x\|. \] See Figure \ref{fig:euclidean-equilibrium}(a). Equivalently, for every pair of blue points $u,v \in B$, all red points lie on a common level set of \[ x \mapsto \|x-u\| - \|x-v\|. \] Suppose, for a contradiction, that the red points are not collinear. Since $R$ contains at least three distinct points, we may choose three noncollinear red points $p,q,r \in R$. By balancedness, all blue points lie on one level set of \[ x \mapsto \|p-x\| - \|q-x\|, \] and also on one level set of \[ x \mapsto \|p-x\| - \|r-x\|. \] By Lemma \ref{lem:tech-1}, these two levels sets meet at two points at most. Therefore $B$ contains at most two distinct points, contradicting the hypothesis that $B$ contains at least three distinct points. Hence $R$ must be collinear. By symmetry, $B$ must also be collinear.

 Let $L_R$ be the line containing $R$ and let $L_B$ be the line containing $B$. We prove that $L_R = L_B$.

 Choose two distinct blue points $u,v \in B$. Since $M$ is balanced, all red points lie on one level set of \[ x \mapsto \|x-u\| - \|x-v\|. \] Since $R$ contains at least three distinct points on the line $L_R$, Lemma \ref{lem:tech-2} implies that 
 \begin{enumerate}
     \item $L_R$ is the perpendicular bisector of $uv$; or
     \item $L_R$ is the line through $u$ and $v$.
 \end{enumerate} Clearly, if the second alternative occurs for some pair $u,v$, then $L_R = L_B$. Suppose instead that the first alternative occurs for every pair of distinct points $u,v \in B$. Since $B$ contains at least three distinct collinear points, the perpendicular bisectors of its different pairs cannot all be the same line, as $L_R$ is fixed. Therefore, this must occur for some pair of blue points, and hence $L_R = L_B$. Thus all red and blue points lie on a common line $L$.

 To prove that the two color classes are linearly separated, choose an affine coordinate $t$ on $L$. Let $u,v \in B$ with minimal and maximal $t$-coordinates among the distinct blue points, so $t(u) < t(v)$. By balancedness, all red points lie on one level set of \[ x \mapsto \|x-u\| - \|x-v\|. \] Restricted to the line $L$, we get \[ \|x-u\| - \|x-v\| = \begin{cases} t(u) - t(v), & t(x) \le t(u), \\ 2t(x) - t(u) - t(v), & t(u) \le t(x) \le t(v), \\ t(v) - t(u), & t(x) \ge t(v). \end{cases} \] On the interval $[t(u),t(v)]$, this function is strictly increasing, so each level set meets that interval in at most one point. On the left exterior ray $t(x) \le t(u)$, the function is constantly $t(u)-t(v)$, while on the right exterior ray $t(x) \ge t(v)$, it is constantly $t(v)-t(u)$. Since $t(u) < t(v)$, these two constants are distinct. Hence a single level set cannot contain points on both exterior rays. Because $R$ contains at least three distinct points and all red points lie on the same level set, all red points must lie on one closed exterior ray: \[ t(a_i)\le t(u) \quad \text{for all } i, \qquad \text{or} \qquad t(a_i)\ge t(v)\quad\text{for all } i.\] Since $u$ and $v$ are the extreme blue points, this means either \[ t(a_i) \le t(b_j) \quad \text{for all}~ i,j, \qquad \text{or} \qquad t(b_j) \le t(a_i) \quad \text{for all}~ i,j. \] Thus the two color classes are linearly separated. See Figure \ref{fig:euclidean-equilibrium}(b). This proves necessity.

 To prove sufficiency, suppose that all points of $R \cup B$ lie on a line $L$, and the two color classes are linearly separated. Choose an affine coordinate $t$ on $L$. Assume, without loss of generality, that $t(a_i) \le t(b_j)$ for all $i,j$. Then \[ \|a_i - b_j\| = t(b_j) - t(a_i). \] By defining $\alpha_i = -t(a_i)$, $\beta_j = t(b_j)$, we get \[ \|a_i - b_j\| = \alpha_i + \beta_j \] for every $i,j$. Hence the Euclidean distance matrix has additive form. By Theorem \ref{thm:balance-distance-matrix-characterization}, $M$ is balanced. The case $t(b_j) \le t(a_i)$ for all $i,j$ is identical, using \[ \|a_i - b_j\| = t(a_i) - t(b_j). \] Therefore $M$ is balanced.
\end{proof}

{\small


\bibliographystyle{abbrv}
\bibliography{main}

}

\end{document}